\newtheorem{theorem}{Theorem}
\newtheorem{prop}{Proposition}
\newtheorem{lemma}{Lemma}
\newtheorem{rem}{Remark}
\newtheorem{exmp}{Example}
\begin{document}
\title[Commutativity preserving transformations of conjugacy classes]
{Commutativity preserving transformations on conjugacy classes of finite rank self-adjoint operators}
\author{Mark Pankov}
\subjclass[2000]{}

\keywords{complex Hilbert space, conjugacy class of finite rank self-adjoint operators,
commutativity preserving transformations}
\address{Faculty of Mathematics and Computer Science, 
University of Warmia and Mazury, S{\l}oneczna 54, Olsztyn, Poland}
\email{pankov@matman.uwm.edu.pl}

\maketitle
\begin{abstract}
Let $H$ be a complex Hilbert space and let ${\mathcal C}$ be 
a conjugacy class of finite rank self-adjoint operators on $H$ with respect to the action of unitary operators.
We suppose that ${\mathcal C}$ is formed by operators of rank $k$ and for every $A\in {\mathcal C}$
the dimensions of distinct maximal eigenspaces are distinct.
Under the assumption that $\dim H\ge 4k$ we establish  that every bijective transformation $f$ of ${\mathcal C}$
preserving the commutativity in both directions is induced by a unitary or anti-unitary operator,
i.e. there is a unitary or anti-unitary operator $U$ such that $f(A)=UAU^{*}$ for every $A\in {\mathcal C}$.
A simple example shows that the condition concerning the dimensions of maximal eigenspaces cannot be omitted.
\end{abstract}

\section{Introduction}
The Grassmannian of $k$-dimensional subspaces of a complex Hilbert space $H$
is identified with the set ${\mathcal P}_k(H)$ of rank $k$ projections
(self-adjoint idempotents in the algebra of all bounded operators).
Projections play an important role in spectral theory of self-adjoint operators.
Projections of rank one correspond to pure states of quantum mechanical systems (Gleason's theorem).
Transformations of Grassmannians preserving various relations and structures are investigated in  
\cite{BJM, GS,Geher,Gyory,Molnar,Pankov1,Pankov2,Semrl,Stormer}.
In almost all cases, these transformations are induced by unitary and anti-unitary operators 
(or linear and conjugate-linear isometries for the non-surjective case). 
A large portion of these results  generalize  classical Wigner's theorem on symmetries of pure states (rank one projections).

Observe that each ${\mathcal P}_{k}(H)$ is a conjugacy class with respect to the action of unitary operators. 
It is natural to ask which of the above mentioned results can be extended on 
other conjugacy classes formed by finite rank self-adjoint operators?
Every such conjugacy class is completely determined by eigenvalues and 
the dimensions of maximal eigenspaces. 

Bijective transformations of ${\mathcal P}_k(H)$  preserving the orthogonality and commutativity (in both directions)
are described in \cite{Gyory,Semrl} and \cite{Pankov1}, respectively.
We consider such kind of transformations on other conjugacy classes of finite rank self-adjoint operators.
Our main result (Theorem 1) concerns commutativity preserving transformations.
A simple example shows that Gy\"ory--{\v S}emrl's theorem \cite{Gyory,Semrl} fails, 
but there is a weak version of this statement (Proposition 1) which will be used to prove the main result.
As in \cite{Pankov1}, we exploit some properties of orthogonal apartments 
(maximal sets of mutually commuting elements). 
In the case when the conjugacy class is different from ${\mathcal P}_k(H)$,  
orthogonal apartments have more complicated  intersections, 
but we will be able to characterize the orthogonality relation in terms of maximal intersections of orthogonal apartments. 
Also, in \cite{Pankov1} we essentially use Chow's theorem \cite{Chow}, 
but there is no analogue of this result for conjugacy classes distinct from ${\mathcal P}_k(H)$.

\section{Main result}
Let $H$ be a complex Hilbert space. 
Denote by ${\mathcal P}_{k}(H)$ the set of all rank $k$  projections 
and write ${\mathcal F}_{s}(H)$ for the real vector space formed by all self-adjoint operators of finite rank.
For every closed subspace $X\subset H$ the projection on $X$ will be denoted by $P_{X}$.

Let $A$ be a self-adjoint operator of finite rank whose non-zero eigenvalues are $\alpha_1,\dots, \alpha_{m}$
(each $\alpha_i$ is a real number). 
Then there is an orthonormal basis of $H$ consisting of eigenvectors of $A$. 
If $X_{i}$ is the maximal eigenspace  corresponding to the eigenvalue $\alpha_i$ 
(the subspace of all vectors $x\in H$ satisfying $A(x)=\alpha_i x$), then
$$A=\sum^{m}_{i=1}\alpha_{i}P_{X_i}$$
(we do not consider the kernel of $A$ as an eigenspace).
If $B$ is a finite rank self-adjoint  operator whose non-zero eigenvalues are $\beta_1,\dots, \beta_{p}$
and  $Y_{1},\dots,Y_p$ are the corresponding maximal eigenspaces,
then $A$ and $B$ are (unitary) conjugate, i.e. there is a unitary operator $U$ on $H$ such that
$$B=UAU^{*},$$
if and only if $m=p$ and 
$$\beta_{i}=\alpha_{\sigma(i)},\;\;\dim Y_{i}=\dim X_{\sigma(i)}$$
for a certain permutation $\sigma$ on the set $\{1,\dots,m\}$.
So, every conjugacy class ${\mathcal C}$ in ${\mathcal F}_{s}(H)$ is completely determined by the pair $(\alpha, d)$,
where $\alpha=\{\alpha_1,\dots, \alpha_m\}$ are non-zero eigenvalues of operators from ${\mathcal C}$
and  $d=\{d_1,\dots, d_{m}\}$ are the dimensions of maximal eigenspaces
such that
for every operator from ${\mathcal C}$ the maximal eigenspace corresponding to $\alpha_i$ is $d_i$-dimensional.

Two operators $A,B\in {\mathcal F}_{s}(H)$ are {\it orthogonal} if
$$AB=BA=0,$$
or equivalently, the images of $A$ and $B$ are orthogonal.
This is possible only in the case when the sum of the operator ranks is not greater than the dimension of $H$.
If $\dim H>2k$, then every bijective transformation of ${\mathcal P}_{k}(H)$
preserving the orthogonality relation in both directions is induced by a unitary or anti-unitary operator on $H$,
but this fails for other conjugacy classes of rank $k$ self-adjoint  ope\-ra\-tors 
(Example \ref{exmp-orth}).

Our main result is the following.

\begin{theorem}
Let ${\mathcal C}$ be a conjugacy class of rank $k$ operators from ${\mathcal F}_{s}(H)$
such that for every operator from ${\mathcal C}$ the dimensions of any two distinct maximal eigenspaces are distinct. 
Suppose that  $\dim H\ge 4k$
{\rm(}possibly $H$ is infinite-dimensional{\rm)}.
Let $f$  be a bijective transformation of ${\mathcal C}$ preserving the commutativity in both directions, i.e.
$$AB=BA\;\Longleftrightarrow\; f(A)f(B)=f(B)f(A)$$
for $A,B\in {\mathcal C}$. 
Then there is a unitary or anti-unitary operator $U$ on $H$ such that 
$$f(A)=UAU^{*}$$
for all $A\in {\mathcal C}$.
\end{theorem}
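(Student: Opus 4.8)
The plan is to reduce the theorem, in stages, to the description of orthogonality preserving transformations of $\mathcal{P}_{k}(H)$ (Proposition 1, a weak form of the Gy\"ory--{\v S}emrl theorem \cite{Gyory,Semrl}), supplemented by a Wigner--Uhlhorn type argument carried out inside the fibres of the range map.

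First I would examine the \emph{orthogonal apartments}, i.e.\ the maximal sets of mutually commuting elements of $\mathcal{C}$. Using the spectral theorem for commuting families of finite rank self-adjoint operators together with the hypothesis $\dim H\ge 4k$, one checks that an orthogonal apartment is exactly the set of all operators from $\mathcal{C}$ that are diagonal with respect to some fixed orthogonal decomposition of $H$ into lines, and that this correspondence between apartments and line decompositions is bijective; since $f$ preserves commutativity in both directions, it permutes the orthogonal apartments. The decisive step — the one replacing the appeal to Chow's theorem \cite{Chow} made in \cite{Pankov1} — is to recover the orthogonality relation $AB=BA=0$ on $\mathcal{C}$ from the commutativity relation alone. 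For this I would first describe the maximal proper intersections of two distinct orthogonal apartments: up to inessential extra elements, such an intersection consists of all operators of a fixed apartment whose range is orthogonal to a fixed $2$-dimensional subspace spanned by two of the lines of that apartment, and such configurations exist in abundance because $\dim H\ge 4k$. One then reads off, for commuting $A,B\in\mathcal{C}$, whether $\operatorname{ran}A\perp\operatorname{ran}B$ from the way $A$ and $B$ lie relative to these maximal intersections. This yields that $f$ preserves orthogonality of operators in both directions; the example in the introduction shows that the analogue of the Gy\"ory--{\v S}emrl theorem fails for $\mathcal{C}$, so one cannot stop here.

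Next, since $\{C\in\mathcal{C}:C\perp A\}=\{C\in\mathcal{C}:\operatorname{ran}C\subseteq(\operatorname{ran}A)^{\perp}\}$ and $\dim(\operatorname{ran}A)^{\perp}\ge 3k$, this set determines $\operatorname{ran}A$; consequently $\operatorname{ran}A=\operatorname{ran}B$ is equivalent to $\{C:C\perp A\}=\{C:C\perp B\}$, so $f$ induces a bijective transformation $g$ of $\mathcal{P}_{k}(H)$ with $\operatorname{ran}f(A)=g(\operatorname{ran}A)$, and $g$ preserves orthogonality of $k$-dimensional subspaces in both directions. Proposition 1 then provides a unitary or anti-unitary operator $U_{0}$ with $g(V)=U_{0}(V)$ for all $V$, and after replacing $f$ by $A\mapsto U_{0}^{*}f(A)U_{0}$ I may assume that $f$ preserves every range. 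It then remains to analyse $f$ on each fibre $\mathcal{C}_{V}=\{A\in\mathcal{C}:\operatorname{ran}A=V\}$; an element of $\mathcal{C}_{V}$ is an ordered orthogonal decomposition of the $k$-dimensional space $V$ of type $(d_{1},\dots,d_{m})$, the ordering being unambiguous because the numbers $d_{i}$ are pairwise distinct (when $m=1$ the fibres are singletons and one recovers at once the situation of \cite{Pankov1}). Exploiting commutativity between $\mathcal{C}_{V}$ and the fibres over those $k$-subspaces that meet $V$ in a hyperplane — such subspaces are available because $\dim H\ge 4k$, and they compensate for $\dim V=k$ being too small to invoke the results of \cite{Pankov1} inside $V$ directly — one shows that $f|_{\mathcal{C}_{V}}$ is induced by a unitary or anti-unitary operator $U_{V}$ of $V$; a connectedness argument forces the type (unitary versus anti-unitary) to be the same for all $V$, and the compatibility of the operators $U_{V}$ on the intersections $V\cap V'$ assembles them into a single unitary or anti-unitary operator $U$ on $H$ with $f(A)=UAU^{*}$ on every fibre, hence on all of $\mathcal{C}$. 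Undoing the normalization of the previous step then finishes the proof.

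I expect the two genuinely hard points to be the intrinsic characterization of the orthogonality relation through maximal intersections of orthogonal apartments, where no analogue of Chow's theorem is at hand, and — after the reduction to the range map — the rigidity statement and the gluing inside the fibres; both steps use the hypothesis $\dim H\ge 4k$ in an essential way.
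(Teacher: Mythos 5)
Your overall skeleton (maximal commuting sets are the orthogonal apartments, recover orthogonality from maximal intersections of apartments, then invoke Proposition \ref{prop-orth}) is the same as the paper's, but the decisive step of the first stage is only gestured at: you never say how one actually ``reads off'' $\mathrm{Im}(A)\perp\mathrm{Im}(B)$ from the position of $A,B$ relative to the maximal intersections, and this is exactly where $\dim H\ge 4k$ must enter. In the paper this is a counting argument: for $A,B$ in a common apartment $\mathcal{A}$ one counts the orthocomplementary subsets $\mathcal{C}_{ij}$ (complements of the maximal orthogonally inexact subsets $\mathcal{A}(+i,+j)\cup\mathcal{A}(-i,-j)$) containing both $A$ and $B$; this number is exactly $k^2$ when $A\perp B$, while if $\dim(\mathrm{Im}(A)\cap\mathrm{Im}(B))=m>0$ it is infinite ($\dim H=\infty$) or at least $(k-m)^2+m(n-2k+m)$, and the quadratic $c(x)=2x^2-(4k-n)x+k^2$ satisfies $c(m)>c(0)=k^2$ for all $m\ge 1$ precisely because $n\ge 4k$. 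Without such an invariant the claim that $f$ preserves orthogonality has no proof; note also that the elements you dismiss as ``inessential extra'' (those with a maximal eigenspace containing both basis vectors, i.e.\ $\mathcal{A}(+i,+j)$) are essential to making the count come out right, and for $2k<n<4k$ the criterion genuinely fails.

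The endgame is a second gap, and as formulated it aims at the wrong target. After replacing $f$ by $A\mapsto U_0^{*}f(A)U_0$ the induced map on ranges is the identity, so any global $U$ obtained by gluing fibrewise operators would have to fix every $k$-dimensional subspace and hence be a scalar; the real task is therefore to prove that the normalized map fixes every operator, not to assemble unitary or anti-unitary operators $U_V$ of the fibres (which are anyway only determined up to phase, and whose ``compatibility on $V\cap V'$'' is undefined for the many pairs with $V\cap V'=0$). Your sketch asserts the fibre rigidity without an argument, and it uses the distinct-dimension hypothesis only to order the decomposition, whereas its real role — the one Example \ref{exmp-com} shows cannot be dispensed with — is to force the eigenvalue attached to each maximal eigenspace. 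The paper's argument is short and direct: for each $1$-dimensional eigenspace $Y$ of $A$ choose $B\in\mathcal{C}$ commuting with $A$ whose range is compatible with $\mathrm{Im}(A)$ and meets it exactly in $Y$; since the normalized map preserves commutativity and ranges, compatibility of the maximal eigenspaces of $g(A)$ and $g(B)$ forces $Y$ to be an eigenspace of $g(A)$, hence $A$ and $g(A)$ have the same maximal eigenspaces, and the distinct dimensions then force the same eigenvalues, so $g(A)=A$. You would need either to reproduce this kind of argument inside your fibre analysis or to replace it by something equally concrete; as it stands, both of the steps you yourself flag as hard are missing.
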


Note that the condition concerning the dimensions of maximal eigenspaces cannot be omitted
(Example \ref{exmp-com}).

\section{Orthogonality preserving transformations}
Let ${\mathcal G}_{k}(H)$ be  the Grassmannian consisting of all $k$-dimensional subspaces of $H$.
If $\dim H>2k$, then every bijective transformation of ${\mathcal G}_k(H)$ preserving the orthogonality relation in both 
directions is induced by a unitary or anti-unitary operator on $H$ \cite{GS,Gyory,Semrl}.
Using this fact, we prove the following.

\begin{prop}\label{prop-orth}
Let ${\mathcal C}$ be a conjugacy class of rank $k$ operators from ${\mathcal F}_{s}(H)$.
Suppose that $\dim H>2k$ and $f$ is a bijective transformation of ${\mathcal C}$ 
preserving the orthogonality relation in both directions, i.e.
$$AB=BA=0\;\Longleftrightarrow\; f(A)f(B)=f(B)f(A)=0$$
for $A,B\in {\mathcal C}$.
Then there is a unitary or anti-unitary operator $U$ such that 
$${\rm Im}(f(A))= U({\rm Im}(A))$$
for every $A\in {\mathcal C}$.
\end{prop}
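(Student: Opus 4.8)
The plan is to show that $f$ descends to a bijective transformation of the Grassmannian ${\mathcal G}_k(H)$ preserving the orthogonality relation, and then to invoke the cited description of such transformations. Throughout, write $\pi\colon{\mathcal C}\to{\mathcal G}_k(H)$ for the map $\pi(A)={\rm Im}(A)$; recall that all operators in ${\mathcal C}$ have rank $k$, so $\pi$ is well defined, and that $AB=BA=0$ is equivalent to ${\rm Im}(A)\perp{\rm Im}(B)$.

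First I would record two elementary facts. The map $\pi$ is surjective: given $W\in{\mathcal G}_k(H)$, split $W$ into mutually orthogonal subspaces $W_1,\dots,W_m$ of dimensions $d_1,\dots,d_m$ and set $A=\sum_{i=1}^{m}\alpha_i P_{W_i}$; then $A\in{\mathcal C}$ and, since all $\alpha_i$ are non-zero, $\pi(A)=W$. Secondly, for a fixed $A\in{\mathcal C}$ the set $\pi(\{B\in{\mathcal C}:AB=BA=0\})$ is precisely the collection of all $k$-dimensional subspaces contained in ${\rm Im}(A)^{\perp}$; here I use $\dim H>2k$, so that $\dim {\rm Im}(A)^{\perp}>k$ and such subspaces exist, together with the surjectivity of $\pi$ applied inside ${\rm Im}(A)^{\perp}$.

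The key step is the following characterization: for $A,B\in{\mathcal C}$ one has ${\rm Im}(A)={\rm Im}(B)$ if and only if $A$ and $B$ are orthogonal to exactly the same elements of ${\mathcal C}$. The forward implication is immediate from $AC=CA=0\Leftrightarrow{\rm Im}(A)\perp{\rm Im}(C)$. For the converse, if $A$ and $B$ are orthogonal to the same elements, then by the second fact above the $k$-dimensional subspaces of ${\rm Im}(A)^{\perp}$ coincide with those of ${\rm Im}(B)^{\perp}$; since a closed subspace of dimension $\ge k$ is spanned by its $k$-dimensional subspaces, ${\rm Im}(A)^{\perp}={\rm Im}(B)^{\perp}$, hence ${\rm Im}(A)={\rm Im}(B)$. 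Because $f$ and $f^{-1}$ both preserve orthogonality, ``orthogonal to the same elements of ${\mathcal C}$'' is an $f$-invariant relation, and therefore ${\rm Im}(A)={\rm Im}(B)\Leftrightarrow{\rm Im}(f(A))={\rm Im}(f(B))$. Consequently $f$ induces a well-defined map $\bar f$ on ${\mathcal G}_k(H)$ by $\bar f({\rm Im}(A))={\rm Im}(f(A))$; surjectivity of $\pi$ makes $\bar f$ surjective, and the displayed equivalence applied to $f(A),f(B)$ together with the orthogonality-preservation of $f^{-1}$ makes $\bar f$ injective.

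It remains to check that $\bar f$ preserves orthogonality in both directions and to conclude. If $X\perp Y$ in ${\mathcal G}_k(H)$, choose $A\in{\mathcal C}$ with ${\rm Im}(A)=X$ and $B\in{\mathcal C}$ with ${\rm Im}(B)=Y$; since $Y\subseteq X^{\perp}$ we get $AB=BA=0$, hence $f(A)f(B)=f(B)f(A)=0$ and $\bar f(X)\perp\bar f(Y)$, and the reverse direction follows by applying the same argument to $f^{-1}$. Now $\dim H>2k$ allows me to apply the quoted result that every bijective orthogonality-preserving transformation of ${\mathcal G}_k(H)$ is induced by a unitary or anti-unitary operator $U$, so $\bar f(X)=U(X)$ for all $X$, which is exactly ${\rm Im}(f(A))=U({\rm Im}(A))$ for every $A\in{\mathcal C}$. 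The one genuinely substantive point is the key characterization above --- realizing that equality of images can be detected purely through the orthogonality relation; the remaining arguments are routine bookkeeping.
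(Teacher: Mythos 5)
Your argument is correct: the surjectivity of $A\mapsto{\rm Im}(A)$ onto ${\mathcal G}_k(H)$, the characterization of equality of images via having the same orthogonal operators in ${\mathcal C}$ (using $\dim H>2k$ so that ${\rm Im}(A)^{\perp}$ contains $k$-dimensional subspaces and is the union of them), and the descent to a bijection $\bar f$ of ${\mathcal G}_k(H)$ preserving orthogonality in both directions are all sound, and the final appeal to the Gy\"ory--\v{S}emrl type theorem is exactly what the hypothesis $\dim H>2k$ is there for. The route, however, differs from the paper's in the descent mechanism. The paper does not characterize image equality directly; instead it works with the partially ordered set ${\mathcal L}$ of all closed subspaces whose dimension and codimension are at least $k$, associates to each $X\in{\mathcal L}$ the set $[X]$ of operators with image contained in $X$, and shows via a minimality-plus-duality argument (choosing the minimal $Y$ with $f([X])\subset[Y]$ and using $f([X^{\perp}])\subset[Y^{\perp}]$) that $f([X])=[g(X)]$ with $g(X^{\perp})=g(X)^{\perp}$; this makes $g$ an inclusion-preserving automorphism of ${\mathcal L}$, which must permute the minimal elements ${\mathcal G}_k(H)$ and preserve orthogonality there, after which the same cited theorem is applied. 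Your version is more economical: it bypasses the lattice ${\mathcal L}$ and the automorphism argument entirely, at the cost of the one extra (easy) observation that a closed subspace of dimension at least $k$ is the union of its $k$-dimensional subspaces, so that common orthogonal sets detect equality of images. The paper's construction yields in passing an order automorphism of ${\mathcal L}$, which is slightly more structure than the proposition requires; both proofs ultimately rest on the same external result about orthogonality preservers of ${\mathcal G}_k(H)$.
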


\begin{proof}
For every closed subspace $X\subset H$ 
we denote by $[X]$ the set of all operators from ${\mathcal C}$ whose images are contained in $X$
(this set is empty if $\dim X <k$). 
Then $[X^{\perp}]$ consists of all operators from ${\mathcal C}$ orthogonal to every operator from $[X]$.
If $A\in {\mathcal C}$ is orthogonal to one operator from $[X]$, then it is orthogonal to all operators from $[X]$,
i.e. $A$ belongs to $[X^{\perp}]$. 

Let ${\mathcal L}$ be the partially ordered set formed by all closed subspaces of $H$
whose dimension and codimension both are not less than $k$
(by our assumption, $k$-dimensional subspaces belong to ${\mathcal L}$).
If $X\in {\mathcal L}$, then $X^{\perp}\in {\mathcal L}$ and each of the sets $[X]$ and $[X^{\perp}]$ is non-empty. 
Show that for every $X\in {\mathcal L}$ there is $g(X)\in {\mathcal L}$ such that $f([X])=[g(X)]$
and we have $g(X^{\perp})=g(X)^{\perp}$.

Consider the minimal subspace $Y$ with respect to the property that $f([X])\subset [Y]$.
Then $f([X^{\perp}])\subset [Y^{\perp}]$.
Since every operator from $[Y]$ is orthogonal to each operator from $[Y^{\perp}]$
and $f$ is orthogonality preserving in both directions, 
$f([X])=[Y]$ and $f([X^{\perp}])=[Y^{\perp}]$ which gives the claim.

So, $f$ induces a bijective transformation $g$ of ${\mathcal L}$.
For $X,Y\in {\mathcal L}$ 
$$X\subset Y\Leftrightarrow [X]\subset [Y]\Leftrightarrow f([X])\subset f([Y])\Leftrightarrow[g(X)]\subset [g(Y)]\Leftrightarrow
g(X)\subset g(Y)$$
which means that $g$ is an automorphism of the partially ordered set ${\mathcal L}$.
Then $g({\mathcal G}_{k}(H))={\mathcal G}_{k}(H)$. 
Since $g$ is orthogonality preserving in both directions,
there is a unitary or anti-unitary operator $U$ such that $g(X)=U(X)$ for every $X\in  {\mathcal G}_{k}(H)$.
On the other hand, the image of every $A\in {\mathcal C}$ is $k$-dimensional and 
$${\rm Im}(f(A))=g({\rm Im}(A))$$
which implies the required equality. 
\end{proof}

\begin{exmp}\label{exmp-orth}{\rm
As above, we suppose that ${\mathcal C}$ is a conjugacy class of rank $k$ operators from ${\mathcal F}_{s}(H)$
and $\dim H>2k$. 
Assume also that ${\mathcal C}\ne {\mathcal P}_{k}(H)$ and 
take any $k$-dimen\-sional subspace $X\subset H$.
There are distinct operators $A,B\in {\mathcal C}$ whose images coincide with $X$. 
Let $f$ be the transformation of ${\mathcal C}$ which transposes $A,B$ and leaves fixed all other operators from ${\mathcal C}$.
It preserves the orthogonality relation  in both directions 
($C\in {\mathcal C}$ is orthogonal to $A$ if and only if it is orthogonal to $B$),
but there is no unitary or anti-unitary operator $U$
satisfying $f(C)=UCU^{*}$ for all $C\in {\mathcal C}$.
}\end{exmp}

\section{Geometric interpretation of commutativity} 
Two projections $P_{X}$ and $P_{Y}$ commute if and only if the subspaces $X$ and $Y$ are compatible.
The latter means that the subspace 
$$(X\cap Y)^{\perp}\cap X\;\mbox{ and }\;(X\cap Y)^{\perp}\cap Y$$
are orthogonal.
For example, two closed subspaces are compatible if they are orthogonal or 
one of these subspaces is contained in the other.
In general, two closed subspaces of $H$ are compatible if and only if there is an orthonormal basis of $H$
such that each of these subspaces is spanned by a subset of this basis. 

Let $A$ and $B$ be finite rank self-adjoint operators whose maximal eigenspaces are
$X_{1},\dots,X_{m}$ and $Y_1,\dots, Y_p$ (respectively).
Then $A$ and $B$ commute if and only if each pair $P_{X_i},P_{Y_j}$ commute,
i.e. every $X_{i}$ is compatible to all $Y_j$.
This is a simple case of von Neumann theorem on projection-valued measures \cite[Section VI.2]{Var}. 

We say that a set formed by closed subspaces of $H$ is {\it compatible} if any two elements of this set are compatible.
For any orthonormal basis $B$ of $H$ the set of all closed subspaces spanned by subsets of $B$
is called the {\it orthogonal apartment} associated to the basis $B$. 
Every  orthogonal apartment is a compatible set.

\begin{prop}\label{prop-ap}
Every compatible set consisting of finite-dimensional subspaces is contained in an orthogonal apartment.
\end{prop}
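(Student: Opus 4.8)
The plan is to rephrase the statement as a simultaneous-diagonalisation problem for the projections attached to the subspaces. Recall that a set $\mathcal S$ of closed subspaces is compatible exactly when the projections $P_X$, $X\in\mathcal S$, pairwise commute, and that for an orthonormal basis $B$ of $H$ the associated orthogonal apartment contains $\mathcal S$ precisely when every $X\in\mathcal S$ is spanned by a subset of $B$; the latter holds if and only if each $e\in B$ satisfies $e\in X$ or $e\perp X$, i.e. $P_Xe\in\{0,e\}$. So it suffices to find an orthonormal basis $B$ of $H$ whose every vector is a common eigenvector of all $P_X$, $X\in\mathcal S$ (the corresponding eigenvalues then automatically belong to $\{0,1\}$). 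Given such a $B$, for every $X\in\mathcal S$ one has $X=\overline{\mathrm{span}}\{e\in B:e\in X\}$: the inclusion $\supseteq$ is clear, and since each basis vector lies in $X$ or in $X^{\perp}$, both sides have the same orthogonal complement, namely the closed span of $\{e\in B:e\perp X\}$.

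The heart of the proof — and the only place where the finite-dimensionality hypothesis is used — is to show that the common eigenvectors of the family $\{P_X:X\in\mathcal S\}$ span a dense subspace $W$ of $H$. Since each $P_X$ is bounded and carries every common eigenvector into the line it spans, $W$ is invariant under all $P_X$, hence so is $W^{\perp}$. Assume $W\neq H$ and fix a unit vector $e_0\in W^{\perp}$. If $P_X$ vanishes on $W^{\perp}$ for every $X\in\mathcal S$, then $e_0$ is a common eigenvector and therefore lies in $W$, a contradiction. Otherwise pick $X_0\in\mathcal S$ with $P_{X_0}|_{W^{\perp}}\neq 0$ and set $Z:=X_0\cap W^{\perp}$; this subspace is non-zero, and it is finite-dimensional because $Z\subseteq X_0$. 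As $W^{\perp}$ reduces $P_{X_0}$, the projections $P_{X_0}$ and $P_{W^{\perp}}$ commute and $P_Z=P_{X_0}P_{W^{\perp}}$; since every $P_X$ commutes with $P_{X_0}$ and (because $W$ is invariant) with $P_{W^{\perp}}$, every $P_X$ commutes with $P_Z$, so $Z$ reduces each $P_X$. Thus $\{P_X|_Z:X\in\mathcal S\}$ is a commuting family of projections on the non-zero finite-dimensional space $Z$; by an easy induction on $\dim Z$ (split off the image of a member that is neither $0$ nor the identity) it has a common eigenvector. Such a vector lies in $Z\subseteq W^{\perp}$ and is a common eigenvector of the whole family, hence also lies in $W$ — a contradiction. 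Therefore $W=H$.

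It remains to produce the basis from $W=H$. For each map $\lambda\colon\mathcal S\to\{0,1\}$ the common eigenspace $E_{\lambda}=\bigcap_{\lambda(X)=1}\mathrm{Im}(P_X)\cap\bigcap_{\lambda(X)=0}\ker(P_X)$ is a closed subspace, and distinct $E_{\lambda}$ are mutually orthogonal (any two are separated by some $P_X$); the common eigenvectors of the family are precisely the non-zero vectors lying in one of the $E_{\lambda}$, so $\overline{\bigoplus_{\lambda}E_{\lambda}}=W=H$ by the previous step. Choosing an orthonormal basis in each $E_{\lambda}$ and taking the union yields an orthonormal basis $B$ of $H$ consisting of common eigenvectors, whence $\mathcal S$ lies in the orthogonal apartment of $B$ by the first paragraph. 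The main obstacle is exactly the density step: for a general commuting family of projections there may be \emph{no} common eigenvectors at all (e.g. the multiplications by $\mathbf 1_{[0,t]}$, $t\in[0,1]$, on $L^{2}[0,1]$), so the proof must genuinely use that each $X$ is finite-dimensional — this is what forces the reducing subspace $Z$ to be finite-dimensional and hence simultaneously diagonalisable.
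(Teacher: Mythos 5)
Your argument is correct, but it takes a genuinely different route from the paper. The paper stays in the geometric language of compatible subspaces: it quotes two closure properties of the compatibility relation (Varadarajan, Lemma 3.10), uses Zorn's lemma to produce a maximal family ${\mathcal Y}$ of mutually orthogonal finite-dimensional subspaces compatible with every member of the given set, shows by a dichotomy that the closed span of ${\mathcal Y}$ is all of $H$, and then assembles the basis blockwise inside each finite-dimensional $Y\in{\mathcal Y}$ (with a separate recursive refinement argument when $H$ itself is finite-dimensional). You instead translate compatibility into commutation of the projections $P_X$ and prove simultaneous diagonalisability directly: the span $W$ of the common eigenvectors is dense, because if $W^{\perp}\neq 0$ then either every $P_X$ kills $W^{\perp}$ (giving a new common eigenvector) or some $X_0$ yields the non-zero finite-dimensional reducing subspace $Z=X_0\cap W^{\perp}$, on which the commuting projections have a common eigenvector — the same dichotomy the paper uses for $H'$, but implemented with reducing subspaces rather than the compatibility lemma; the joint eigenspaces $E_{\lambda}$ then play the role of the paper's family ${\mathcal Y}$. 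What your version buys: a uniform treatment of finite- and infinite-dimensional $H$, no appeal to the compatibility lemma, and no Zorn argument beyond choosing orthonormal bases of the $E_{\lambda}$; what the paper's version buys is that it remains in the subspace/apartment vocabulary used throughout the article and exhibits the orthogonal decomposition ${\mathcal Y}$ explicitly. One small wording point: since $W$ (an algebraic span) need not be closed, ``assume $W\neq H$'' should read ``assume $W^{\perp}\neq 0$'' (equivalently $\overline{W}\neq H$); this is harmless, as your contradiction uses only a unit vector of $W^{\perp}$ and the final step only needs $\overline{W}=H$.
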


\begin{proof}
We will use the following properties of the compatibility relation:
\begin{enumerate}
\item[{\rm (1)}] 
If closed subspaces $X,Y\subset H$ are compatible, then $X$ and $Y^{\perp}$ are compatible. 
\item[{\rm (2)}] 
If ${\mathcal Z}$ is a set formed by closed subspaces of $H$
and $X$ is a closed subspace of $H$ compatible to all elements of ${\mathcal Z}$,
then $X$ is compatible to $\bigcap_{Z\in {\mathcal Z}}Z$ and the smallest closed subspace containing all 
elements of ${\mathcal Z}$.
\end{enumerate}
See \cite[Lemma 3.10]{Var}.

Let ${\mathcal X}$ be a compatible set formed by finite-dimensional subspaces.
First, we prove the statement for the case when $H$ is finite-dimensional.
Consider a set  ${\mathcal Y}$ which consists of mutually orthogonal subspaces  whose sum coincides with $H$ and 
such that every element of ${\mathcal Y}$ is compatible to all elements of ${\mathcal X}$.
For example, for every $X\in {\mathcal X}$ the set $\{X,X^{\perp}\}$ satisfies this condition.
If there is $X\in {\mathcal X}$ which intersects a certain $Y\in {\mathcal Y}$ in a proper subspace of $Y$,
then 
$$Y'=Y\cap X\;\mbox{ and }\; Y''=(Y\cap X)^{\perp}\cap Y$$
are orthogonal subspaces whose sum coincides with $Y$ and each of these subspaces 
is compatible to all elements of ${\mathcal X}$.
In this case, we replace ${\mathcal Y}$ by the set
$$({\mathcal Y}\setminus \{Y\})\cup \{Y',Y''\}.$$
Since $H$ is finite-dimensional, 
we can construct  recursively a set ${\mathcal Y}$ satisfying the above conditions and such that 
for all $X\in {\mathcal X}$ and $Y\in {\mathcal Y}$ we have either  $X\cap Y=Z$ or $X\cap Y=0$.
Then every $X\in {\mathcal X}$ is the sum of all elements of ${\mathcal Y}$ contained in $X$.
There is an orthonormal basis of $H$ such that every element of ${\mathcal Y}$ is spanned by a subset of this basis.
The associated orthogonal apartment contains ${\mathcal X}$. 

Now, let $H$ be infinite-dimensional.
Consider the family of all sets  formed by mutually orthogonal finite-dimensional subspaces of $H$
compatible to all elements of ${\mathcal X}$.
Using Zorn Lemma, we establish the existence of a maximal set ${\mathcal Y}$ with respect to this property.
Let $H'$ be the smallest closed subspace containing all elements of ${\mathcal Y}$.
This subspace is compatible to all elements of ${\mathcal X}$.
Show that $H'=H$.

Suppose that $H'$ is a proper subspace of $H$.
If all elements of ${\mathcal X}$ are contained in $H'$, 
then we add to ${\mathcal Y}$ any non-zero finite-dimensional subspace of $H'^{\perp}$
and get a set whose elements are mutually orthogonal and compatible to all elements of ${\mathcal X}$.
This is impossible, since ${\mathcal Y}$ is maximal with respect to this property. 
If there is $X\in {\mathcal X}$ which is not contained in $H'$, 
then the non-zero subspace $(X\cap H')^{\perp}\cap X$ 
is compatible to all elements of ${\mathcal X}$ and orthogonal to all elements of ${\mathcal Y}$.
As above, we add this subspace to ${\mathcal Y}$ and obtain a set whose elements are mutually orthogonal and compatible to 
all elements of ${\mathcal X}$, a contradiction.

For every $Y\in {\mathcal Y}$ we denote by ${\mathcal X}_{Y}$
the set formed by all intersections of $Y$ with elements of ${\mathcal X}$. 
This set is compatible (possibly ${\mathcal X}_{Y}\subset \{0,Y\}$).
Since every $Y\in {\mathcal Y}$ is finite-dimensional, 
there is an orthonormal basis $B_{Y}$ of $Y$ such that all elements of ${\mathcal X}_{Y}$ are spanned by subsets of this basis
(we can take any orthonormal basis of $Y$ if ${\mathcal X}_{Y}\subset \{0,Y\}$).
Let $B$ be the union of all $B_Y$. This is an orthonormal basis of $H$.
For every $X\in {\mathcal X}$ the intersection $X\cap Y$ is non-zero only for finitely many $Y\in {\mathcal Y}$ 
and the sum of all such $X\cap Y$ coincides with $X$
(this follows from the fact that $X$ is compatible to all elements of ${\mathcal Y}$).
This means that ${\mathcal X}$ is contained in the orthogonal apartment defined by the basis $B$.
\end{proof}

Let ${\mathcal C}$ be a conjugacy class of operators from ${\mathcal F}_{s}(H)$.
For every orthonormal basis $B$ of $H$ the set of all operators from ${\mathcal C}$
whose maximal eigenspaces are spanned by subsets of $B$ is said to be 
the {\it orthogonal apartment} of ${\mathcal C}$ defined by the basis $B$.
Any two operators from an orthogonal apartment commute.
Conversely,
if ${\mathcal X}$ is a subset of ${\mathcal C}$ formed by mutually commuting operators,
then, by  Proposition \ref{prop-ap}, there is an orthogonal apartment of ${\mathcal C}$ containing ${\mathcal X}$.
Therefore, orthogonal apartments of ${\mathcal C}$ can be characterized as 
maximal subsets ${\mathcal X}\subset {\mathcal C}$ 
with respect to the property that any two operators from ${\mathcal X}$ commute.
Therefore, for a bijective transformation $f$ of ${\mathcal C}$ the following two conditions are equivalent:
\begin{enumerate}
\item[$\bullet$] $f$ is commutativity preserving in both directions,
\item[$\bullet$] $f$ and $f^{-1}$ send orthogonal apartments to orthogonal apartments. 
\end{enumerate}
Using some properties of orthogonal apartments, we prove the following.

\begin{lemma}\label{lemma-main}
If $\dim H\ge 4k$,
then every bijective transformation of ${\mathcal C}$ preserving the commutativity in both directions is
orthogonality preserving in both directions.
\end{lemma}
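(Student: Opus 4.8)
The strategy is to recognise the orthogonality relation on $\mathcal C$ from the commutativity relation alone; since $f$ and $f^{-1}$ both preserve commutativity, they will then preserve orthogonality, which is precisely the assertion of the lemma.

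I would start from the elementary remark that for $A,B\in\mathcal C$ with $AB=BA$ orthogonality is equivalent to $\operatorname{Im}(A)\cap\operatorname{Im}(B)=0$: if $A$ and $B$ commute then every maximal eigenspace of $A$ is compatible with every maximal eigenspace of $B$, and two compatible subspaces with trivial intersection are orthogonal, so $\operatorname{Im}(A)\perp\operatorname{Im}(B)$; the reverse implication is trivial. By the discussion preceding the lemma, $f$ and $f^{-1}$ send orthogonal apartments of $\mathcal C$ to orthogonal apartments. Hence it suffices to prove the following: if $A,B\in\mathcal C$ lie in an orthogonal apartment associated with an orthonormal basis, and $T_A,T_B$ denote the subsets of that basis spanning $\operatorname{Im}(A)$ and $\operatorname{Im}(B)$ respectively, then the condition $T_A\cap T_B=\varnothing$ (equivalently $\operatorname{Im}(A)\cap\operatorname{Im}(B)=0$) can be expressed purely in terms of orthogonal apartments.

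The tool is the combinatorics of intersections of orthogonal apartments. I would first describe $\mathcal A\cap\mathcal A'$ for two orthogonal apartments: an operator lies in both precisely when each of its maximal eigenspaces is a coordinate subspace for the basis of $\mathcal A$ and for the basis of $\mathcal A'$. Calling two distinct apartments \emph{adjacent} when their intersection is maximal — not properly contained in the intersection of another pair of distinct apartments — I expect to show that the adjacent pairs are exactly those whose bases differ by replacing the two vectors spanning one coordinate plane $\pi$ by another orthonormal basis of $\pi$, and that for such a pair the intersection determines $\pi$. This is where the hypothesis $\dim H\ge 4k$ first enters, to guarantee that there is always enough room for operators of $\mathcal C$ away from a prescribed plane so that the extremal configurations have the expected form. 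The upshot is an intrinsic, hence $f$-invariant, notion of adjacency of apartments and of the plane of an adjacency.

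The decisive step is to detect $T_A\cap T_B=\varnothing$ by means of adjacency. Starting from a common apartment of $A$ and $B$, the adjacent apartments still containing both $A$ and $B$ are exactly those obtained by rotating a coordinate plane lying inside a single block of the common refinement of the eigenspace decompositions of $A$ and $B$; when $T_A\cap T_B=\varnothing$ this refinement keeps each eigenspace of $A$ and of $B$ as a whole block, whereas when $T_A\cap T_B\ne\varnothing$ the blocks meeting $\operatorname{Im}(A)+\operatorname{Im}(B)$ are strictly finer. I would turn this dichotomy into a statement about the existence of appropriate systems of mutually adjacent apartments through $A$ and $B$ whose adjacency planes sweep out $\operatorname{Im}(A)$ and $\operatorname{Im}(B)$. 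Converting this geometric picture into a purely order-theoretic condition, and verifying that $\dim H\ge 4k$ is enough for all the configurations the argument requires to actually exist, is the part I expect to be the main obstacle. Once this is done, $T_A\cap T_B=\varnothing$ — and hence orthogonality of commuting operators — is expressed in terms of orthogonal apartments alone, so $f$ and $f^{-1}$ preserve orthogonality in both directions, which proves the lemma.
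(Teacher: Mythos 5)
Your setup is sound and runs parallel to the paper's: the observation that for commuting $A,B$ orthogonality is equivalent to $\operatorname{Im}(A)\cap\operatorname{Im}(B)=0$ is correct, and your ``adjacent apartments come from rotating one coordinate plane'' is essentially the paper's Lemma~\ref{lemma2}, which identifies the maximal orthogonally inexact subsets of an apartment ${\mathcal A}$ with the sets ${\mathcal A}(+i,+j)\cup{\mathcal A}(-i,-j)$. But the decisive step --- an apartment-combinatorial criterion that actually separates orthogonal from non-orthogonal commuting pairs --- is exactly what you leave as ``the main obstacle,'' and it is the heart of the lemma. The paper supplies it by \emph{counting}: for $A,B$ in a common apartment it counts the orthocomplementary subsets ${\mathcal C}_{ij}$ (complements of maximal inexact subsets) containing both, gets $k^2$ when $A\perp B$ and at least $c(m)=(k-m)^2+m(n-2k+m)=2m^2-(4k-n)m+k^2$ when $\dim(\operatorname{Im}(A)\cap\operatorname{Im}(B))=m>0$, and then checks that $c(m)>c(0)=k^2$ for all $m>0$ precisely because $n\ge 4k$ makes the vertex of this quadratic nonpositive. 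This count is $f$-invariant because $f$ and $f^{-1}$ carry maximal inexact subsets to maximal inexact subsets, and that finishes the proof. Without some such quantitative criterion your plan does not close, and the hypothesis $\dim H\ge 4k$ is not just ``enough room'': the paper's Remark 3 shows that for $2k<\dim H<4k$ the natural count can coincide for orthogonal and non-orthogonal pairs, so any argument in this spirit must isolate exactly where $4k$ enters, which yours does not.

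There is also a flaw in the dichotomy you propose to exploit. It is not true that when $T_A\cap T_B\ne\varnothing$ the blocks of the common refinement meeting $\operatorname{Im}(A)+\operatorname{Im}(B)$ are strictly finer than the eigenspaces: $A$ and $B$ may share a maximal eigenspace outright (or an eigenspace of one may be contained in an eigenspace of the other), e.g. two operators with the same $d_2$-dimensional eigenspace but different, mutually orthogonal $d_1$-dimensional eigenspaces. In such cases the refinement inside $\operatorname{Im}(A)+\operatorname{Im}(B)$ has blocks that are whole eigenspaces even though the images intersect, so the ``sweeping out by adjacency planes'' picture would have to be reformulated; the paper's counting of ${\mathcal C}_{ij}$ through the pair (including the planes spanned by one vector in $X\cap Y$ and one outside $X+Y$) sidesteps this issue entirely. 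So as written the proposal has a genuine gap: the intrinsic criterion for orthogonality is asserted to exist but never constructed, and the geometric dichotomy meant to ground it fails in degenerate but admissible configurations.
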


Now, we show that Theorem 1 follows from Proposition \ref{prop-orth} and Lemma \ref{lemma-main}.

Let $f$ be a bijective  transformation of ${\mathcal C}$ preserving the commutativity in both directions
and  $\dim H\ge 4k$.
By Lemma \ref{lemma-main}, $f$ is orthogonality preserving in both directions
and Pro\-po\-sition \ref{prop-orth} implies the existence of a unitary or anti-unitary operator $U$ such that
$${\rm Im}(f(A))= U({\rm Im}(A))$$
for every $A\in {\mathcal C}$.
Consider the transformation $g$ of ${\mathcal C}$ defined as
$$g(A)= U^{*}f(A)U$$
for every $A\in {\mathcal C}$. This is a bijective transformation which preserves the commutativity in both directions
and leaves fixed the image of every operator from ${\mathcal C}$.
So, the image of every $A\in {\mathcal C}$ coincides with the image of $g(A)$ 
which implies that $g(A)=A$ only in the case when ${\mathcal C}={\mathcal P}_{k}(H)$.

Show that every $1$-dimensional eigenspace $Y$ of $A$ is an eigenspace of $g(A)$
(recall that we do not consider eigenspaces contained in the kernels of operators).
We take a $k$-dimensional subspace $N$ compatible to ${\rm Im}(A)$ and intersecting ${\rm Im}(A)$ precisely in $Y$.
There is an operator $B\in {\mathcal C}$ whose image is $N$ and such that $AB=BA$.
Then $g(A)$ and $g(B)$ commute and the image of $g(B)$ is $N$.
If $Y$ is not an eigenspace of $g(A)$, then the intersection of every maximal eigenspace $X'$ of $g(A)$ with
every maximal eigenspace $Y'$ of $g(B)$ is zero.
Since $g(A)$ and $g(B)$ commute, $X'$ and $Y'$ are compatible which implies that $X'$ and $Y'$ are orthogonal.
Therefore, all maximal eigenspaces of $g(A)$ are orthogonal to all maximal eigenspaces of $g(B)$
which means that the images of $g(A)$ and $g(B)$ are orthogonal and we get a contradiction
(the intersection of these images is $Y$).

Similarly, we establish that every $1$-dimensional eigenspace of $g(A)$ is an eigenspa\-ce of $A$.
So, a $1$-dimensional subspace is an eigenspace of $A$ if and only if it is an eigenspace of $g(A)$.
This is possible only in the case when every maximal eigenspace of $A$ is a maximal eigenspace of $g(A)$
and conversely. 
By our assumption (see Theorem 1), the dimensions of any two distinct maximal eigenspaces of $A$ are distinct 
and the same holds for the maximal eigenspaces of $g(A)$. 
In other words, 
every maximal eigenspace of $A$ and $g(A)$ corresponds to the same eigenvalue as an eigenspace of $A$ 
and as an eigenspace of $g(A)$. Then $g(A)=A$
and we have $f(A)=UAU^{*}$ for every $A\in {\mathcal C}$.

\begin{exmp}\label{exmp-com}{\rm
Suppose that every operator from ${\mathcal C}$ has precisely two eigenvalues $\alpha,\beta$
and the maximal eigenspace corresponding to each of these eigenvalues is $m$-dimensional.
If $X$ and $Y$ are orthogonal $m$-dimensional subspaces, then the operators
$$A=\alpha P_{X}+\beta P_{Y}\;\mbox{ and }\; B=\alpha P_{Y}+\beta P_{X}$$
belong to ${\mathcal C}$. 
Let $f$ be the bijective transformation of ${\mathcal C}$ which permutes $A,B$ and leaves fixed all other elements of 
${\mathcal C}$. 
Then $f$ is commutativity preserving in both directions. 
Indeed, $C\in {\mathcal C}$ commutes with $A$ if and only if it commutes with $B$.
On the other hand, there is no unitary or anti-unitary operator $U$ such that
$f(C)=UCU^{*}$ for every $C\in {\mathcal C}$.
}\end{exmp}

\section{Proof of Lemma \ref{lemma-main}}
Let ${\mathcal C}$ be a conjugacy class of rank $k$ operators from ${\mathcal F}_{s}(H)$
and $\dim H>2k$.
We take any  orthonormal basis $B=\{e_{i}\}_{i\in I}$ of $H$
and consider the associated orthogonal apartment ${\mathcal A}\subset {\mathcal C}$. 
Recall that ${\mathcal A}$ consists of all operators from ${\mathcal C}$
whose maximal eigenspaces are spanned  by subsets of $B$.

A subset ${\mathcal X}\subset{\mathcal A}$ is said to be {\it orthogonally inexact}
if there is an orthogonal apartment of ${\mathcal C}$ distinct from ${\mathcal A}$ and containing ${\mathcal X}$.

\begin{exmp}{\rm
For any distinct $i,j\in I$ we denote by ${\mathcal A}(+i,+j)$ 
the set of all operators $A\in {\mathcal A}$ such that one of the maximal eigenspaces of $A$ contains both $e_i$ and $e_j$. 
Let ${\mathcal A}(-i,-j)$ be the set of all operators from ${\mathcal A}$ whose images 
are orthogonal to the $2$-dimensional subspace $S$ spanned by $e_i$ and $e_j$.
We take any orthogonal unit vectors $e'_{i},e'_{j}\in S$ which are not scalar multiples of $e_i,e_j$.
Then the orthogonal apartment defined by the basis $$(B\setminus \{e_i,e_j\})\cup\{e'_i,e'_j\}$$
intersects ${\mathcal A}$ precisely in the subset
\begin{equation}\label{eq1}
{\mathcal A}(+i,+j)\cup {\mathcal A}(-i,-j),
\end{equation}
i.e. this subset is orthogonally inexact. 
}\end{exmp}

\begin{lemma}\label{lemma2}
Every maximal orthogonally inexact subset of ${\mathcal A}$ is of type \eqref{eq1}.
\end{lemma}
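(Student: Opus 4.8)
The plan is to establish two facts about subsets of ${\mathcal A}$ and then combine them. \textbf{(I)} Every orthogonally inexact subset of ${\mathcal A}$ is contained in some ${\mathcal A}(+i,+j)\cup{\mathcal A}(-i,-j)$, i.e. in a subset of type \eqref{eq1}. \textbf{(II)} Every subset of type \eqref{eq1} is non-empty and is orthogonally inexact. Granting these, the lemma is immediate: a maximal orthogonally inexact subset ${\mathcal M}\subset{\mathcal A}$ is contained, by (I), in some ${\mathcal A}(+i,+j)\cup{\mathcal A}(-i,-j)$, which is orthogonally inexact by (II), so the maximality of ${\mathcal M}$ forces ${\mathcal M}={\mathcal A}(+i,+j)\cup{\mathcal A}(-i,-j)$.

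Statement (II) is essentially the computation given in the example just before the lemma. Given $i\ne j$ in $I$, choose orthonormal vectors $e'_i,e'_j$ in the plane $S=\langle e_i,e_j\rangle$ that are not scalar multiples of $e_i$ or $e_j$; then $B'=(B\setminus\{e_i,e_j\})\cup\{e'_i,e'_j\}$ is an orthonormal basis, and the orthogonal apartment ${\mathcal A}'$ it defines differs from ${\mathcal A}$ because $e'_i$ is not proportional to any vector of $B$. If $C\in{\mathcal A}(+i,+j)$, then $S$ is contained in one maximal eigenspace of $C$, while every other maximal eigenspace of $C$ and its kernel, being coordinate subspaces of $B$ not meeting $\{e_i,e_j\}$, lie in $S^{\perp}$; replacing $e_i,e_j$ by $e'_i,e'_j$ inside the block containing $S$ shows that all maximal eigenspaces of $C$ are spanned by subsets of $B'$, so $C\in{\mathcal A}'$. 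The case $C\in{\mathcal A}(-i,-j)$ (where $e_i,e_j$ lie in the kernel of $C$) is the same argument with the block containing $S$ being the kernel. Hence ${\mathcal A}(+i,+j)\cup{\mathcal A}(-i,-j)\subset{\mathcal A}\cap{\mathcal A}'$, so it is orthogonally inexact; it is non-empty because $\dim H>2k$ leaves at least $k$ vectors of $B$ orthogonal to $e_i,e_j$, from which --- as ${\mathcal C}$ is a conjugacy class --- one assembles an operator of ${\mathcal C}$ lying in ${\mathcal A}(-i,-j)$.

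The heart of the matter is (I), and this is where I expect the real work. Let ${\mathcal X}\subset{\mathcal A}$ be orthogonally inexact and fix an orthogonal apartment ${\mathcal A}'\supset{\mathcal X}$ with ${\mathcal A}'\ne{\mathcal A}$, defined by an orthonormal basis $B'$. Two orthonormal bases define the same orthogonal apartment exactly when each vector of one is a scalar multiple of a vector of the other --- indeed a $1$-dimensional subspace spanned by a subset of a basis is spanned by a single vector of that basis --- so ${\mathcal A}'\ne{\mathcal A}$ yields a vector $e'_0\in B'$ whose expansion $e'_0=\sum_{i\in I}c_i e_i$ has at least two non-zero coefficients, say $c_{i_1}$ and $c_{i_2}$. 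I claim ${\mathcal X}\subset{\mathcal A}(+i_1,+i_2)\cup{\mathcal A}(-i_1,-i_2)$. Pick $C\in{\mathcal X}$, with maximal eigenspaces $X_1,\dots,X_m$ and kernel $X_0=(X_1\oplus\cdots\oplus X_m)^{\perp}$; since $C$ lies in both apartments, each $X_t$ ($t=0,\dots,m$) is a coordinate subspace of $B$ and of $B'$. Being a coordinate subspace of $B'$, $X_t$ either contains $e'_0$ or is orthogonal to it; being the coordinate subspace $\langle e_i:i\in J_t\rangle$ of $B$, the orthogonal projection of $e'_0$ onto $X_t$ equals $\sum_{i\in J_t}c_i e_i$, which must therefore be $e'_0$ or $0$. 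In the first case the support $\{i:c_i\ne0\}$ of $e'_0$ is contained in $J_t$, in the second it is disjoint from $J_t$; hence this support is contained in a single block $J_t$ of the partition. As $i_1,i_2$ lie in the support, $e_{i_1}$ and $e_{i_2}$ lie in a common block: either in one maximal eigenspace of $C$, so that $C\in{\mathcal A}(+i_1,+i_2)$, or both in $X_0$, that is $\mathrm{Im}(C)\perp\langle e_{i_1},e_{i_2}\rangle$, so that $C\in{\mathcal A}(-i_1,-i_2)$. This proves (I).

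The only points requiring a little care are the description of when two orthonormal bases give the same orthogonal apartment and the repeatedly used fact that the orthogonal complement and the direct sum of coordinate subspaces of a fixed orthonormal basis are again coordinate subspaces --- which is precisely what places the kernel of $C$ in the relevant coordinate lattice once its maximal eigenspaces are there. I do not anticipate further obstacles.
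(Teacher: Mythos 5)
Your proof is correct, and its overall skeleton (reduce the lemma to: every orthogonally inexact subset lies in a set of type \eqref{eq1}, then invoke the example to see that type \eqref{eq1} sets are themselves orthogonally inexact, so maximality forces equality) coincides with the paper's. Where you differ is in the mechanism for the containment step (I). The paper argues intrinsically: for each $i\in I$ it intersects all maximal eigenspaces of operators of ${\mathcal X}$ containing $e_i$ together with the complements ${\rm Im}(A)^{\perp}$ for those $A\in{\mathcal X}$ with image orthogonal to $e_i$, obtaining a coordinate subspace $S_i\ni e_i$; if every $S_i$ were a line, ${\mathcal A}$ would be the only apartment containing ${\mathcal X}$, so some $\dim S_i\ge 2$, and any $e_j\in S_i$, $j\ne i$, supplies the pair. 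You instead go straight to the second apartment ${\mathcal A}'$ defined by a basis $B'$, extract a vector $e'_0\in B'$ not proportional to any vector of $B$, and use the fact that for each $C\in{\mathcal A}\cap{\mathcal A}'$ the support of $e'_0$ must sit inside a single block of the partition of $I$ induced by the maximal eigenspaces and kernel of $C$; the two indices $i_1,i_2$ in that support then do the job. The two arguments exploit the same underlying fact (coordinate subspaces of the second basis constrain the block structure), but yours is a genuinely different and slightly stronger formulation: it shows directly that ${\mathcal A}\cap{\mathcal A}'$ is contained in a type \eqref{eq1} set for \emph{any} apartment ${\mathcal A}'\ne{\mathcal A}$, and it makes explicit the step the paper leaves implicit (why uniqueness of the apartment would follow if all $S_i$ were lines), at the cost of handling the kernel block and the projection computation by hand, which you do correctly.

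One small point to tighten: in your step (II) you justify ${\mathcal A}'\ne{\mathcal A}$ by appealing to the biconditional ``same apartment iff proportional bases.'' Only the trivial direction of this is used in (I), but (II) needs the nontrivial direction, and your parenthetical justification (about $1$-dimensional subspaces spanned by subsets of a basis) refers to apartments of subspaces rather than apartments of ${\mathcal C}$. Under the standing hypothesis $\dim H>2k$ this is easily repaired: take $A\in{\mathcal A}$ with $e_i$ in a maximal eigenspace and $e_j$ in the kernel (the kernel has dimension greater than $k$, so such $A$ exists); that eigenspace contains $e_i$ but not $e_j$, hence cannot be spanned by a subset of $B'$, so $A\notin{\mathcal A}'$. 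The paper's own example is equally terse on this point, so this is a refinement rather than a flaw in your approach.
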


\begin{proof}
It is sufficient to show that every orthogonally inexact subset ${\mathcal X}\subset {\mathcal A}$
is contained in a subset of type \eqref{eq1}.
For every $i\in I$ we consider the family of all closed subspaces $X\subset H$ satisfying one of the following conditions:
\begin{enumerate}
\item[$\bullet$]  
$X$ is the maximal eigenspace of an operator from ${\mathcal X}$ and $X$ contains $e_{i}$,
\item[$\bullet$]  
there is $A\in {\mathcal X}$ whose image is orthogonal to $e_{i}$ and $X$ is the orthogonal complement of this image.
\end{enumerate}
Note that every such $X$ is spanned by a subset of $B$ and contains $e_i$.
Let $S_i$ be the intersection of all subspaces $X$ satisfying the above conditions. 
Then $S_i$ is spanned by a subset of $B$ and contains $e_i$.
Therefore, if every $S_{i}$ is $1$-dimensional, 
then ${\mathcal A}$ is the unique orthogonal apartment containing ${\mathcal X}$
which contradicts the assumption that ${\mathcal X}$ is orthogonally inexact.
So, there is at least one $i\in I$ such that $\dim S_{i}\ge 2$
and we take any $j\ne i$ satisfying $e_{j}\in S_{i}$.
Then
$${\mathcal X}\subset {\mathcal A}(+i,+j)\cup {\mathcal A}(-i,-j).$$
Indeed, if $A\in {\mathcal X}$ and $X$ is a maximal eigenspace of $A$ containing $e_{i}$, 
then $e_{j}\in S_{i}\subset X$
and $A$ belongs to ${\mathcal A}(+i,+j)$. 
If the image of $A\in {\mathcal X}$ is orthogonal to $e_{i}$, then $e_{j}\in S_{i}\subset {\rm Im}(A)^{\perp}$
which means that $e_{j}$ is orthogonal to ${\rm Im}(A)$
and $A$ belongs to ${\mathcal A}(-i,-j)$. 
\end{proof}

We say that ${\mathcal C}\subset {\mathcal A}$ is an {\it orthocomplementary} subset
if ${\mathcal A}\setminus {\mathcal C}$ is a maximal orthogonally inexact subset, i.e.
\begin{equation}\label{eq2}
{\mathcal C}={\mathcal A}\setminus ({\mathcal A}(+i,+j)\cup {\mathcal A}(-i,-j))
\end{equation}
for some distinct $i,j\in I$.

Denote by ${\mathcal A}(+i,-j)$ the set of all operators $A\in {\mathcal A}$
such that one of the maximal eigenspaces of $A$ contains $e_i$ and does not contain $e_{j}$.

\begin{rem}{\rm
If $A\in {\mathcal A}(+i,-j)$, then one of the following possibilities is realized:
$e_i,e_j$ belong to distinct maximal eigenspaces of $A$ or $e_{j}$ is orthogonal to the image of $A$.
Therefore, 
$${\mathcal A}(+i,-j)\cap {\mathcal A}(+j,-i)$$
consists of all operators $A\in {\mathcal A}$ such that $e_{i}$ and $e_{j}$ belong to 
distinct maximal eigenspaces of $A$. 
In the case when ${\mathcal C}={\mathcal P}_k(H)$, this intersection is empty. 
}\end{rem}

The orthocomplementary subset \eqref{eq2} coincides with 
$${\mathcal A}(+i,-j)\cup {\mathcal A}(+j,-i).$$
This orthocomplementary subset will be denoted by ${\mathcal C}_{ij}$.
Note that ${\mathcal C}_{ij}={\mathcal C}_{ji}$.

For any $A,B\in {\mathcal A}$ we denote by $n_{\mathcal A}(A,B)$ 
the number of all orthocomplementary subsets of ${\mathcal A}$ containing both $A$ and $B$.

\begin{lemma}\label{lemma-3}
Let $A,B\in {\mathcal A}$. The following assertions are fulfilled:
\begin{enumerate}
\item[(i)]  Suppose that $H$ is infinite-dimensional.
If $A,B$ are orthogonal, then $$n_{\mathcal A}(A,B)=k^2.$$
In the case when $A,B$ are non-orthogonal, $n_{\mathcal A}(A,B)$ is infinite.
 \item[(ii)] If $\dim H=n$ is finite and the intersection of the images of $A$ and $B$
is $m$-dimensional, then 
$$n_{\mathcal A}(A,B)\ge (k-m)^{2}+m(n-2k+m).$$
\end{enumerate}
\end{lemma}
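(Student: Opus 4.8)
The plan is to count, for fixed $A,B\in{\mathcal A}$, the pairs $\{i,j\}$ of distinct indices in $I$ for which both $A\in{\mathcal C}_{ij}$ and $B\in{\mathcal C}_{ij}$, since $n_{\mathcal A}(A,B)$ is exactly the number of such pairs. Recall that $A\in{\mathcal C}_{ij}={\mathcal A}(+i,-j)\cup{\mathcal A}(+j,-i)$ means precisely that the basis vectors $e_i,e_j$ are ``separated'' by $A$ in the following sense: it is \emph{not} the case that $e_i,e_j$ lie in a common maximal eigenspace of $A$, and it is \emph{not} the case that both are orthogonal to ${\rm Im}(A)$. Equivalently, writing the maximal eigenspaces of $A$ as $X_1,\dots,X_r$ together with the ``zero block'' $X_0={\rm Im}(A)^\perp$, and letting $c_A:I\to\{0,1,\dots,r\}$ be the map sending each $e_i$ to the index of the block containing it (this is well-defined because $A\in{\mathcal A}$, so $B$ is adapted to the eigenspace decomposition), the condition $A\in{\mathcal C}_{ij}$ says $c_A(i)\ne c_A(j)$ and not both $c_A(i),c_A(j)$ equal $0$; that is, $\{c_A(i),c_A(j)\}$ is a two-element set not equal to $\{0\}$, which since it is two-element just means $c_A(i)\ne c_A(j)$ and at most one of them is $0$ --- i.e. $c_A(i)\ne c_A(j)$, and they are not ``both in the kernel part.'' The first step is to make this bookkeeping precise and reduce the whole lemma to a combinatorial count over the two partitions of $I$ induced by $c_A$ and $c_B$.

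Next I would handle part (i). When $H$ is infinite-dimensional, each of the zero blocks $X_0^A={\rm Im}(A)^\perp$ and $X_0^B={\rm Im}(B)^\perp$ is infinite-dimensional, hence indexed by infinitely many basis vectors; all the non-zero eigenspace blocks are finite-dimensional with total index-count $k$ for each of $A$ and $B$. If $A$ and $B$ are orthogonal, then ${\rm Im}(A)\subset X_0^B$ and ${\rm Im}(B)\subset X_0^A$, and the only way to get a pair $\{i,j\}$ counted by \emph{both} $A$ and $B$ is to take $e_i$ in a non-zero block of $A$ (there are $k$ such indices) and $e_j$ in a non-zero block of $B$ (again $k$ indices), with these index-sets disjoint by orthogonality --- and one checks directly that every such pair is separated by both operators, while no pair with $e_i,e_j$ both in ${\rm Im}(A)^\perp\cap{\rm Im}(B)^\perp$, or both in a single block, can work. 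This gives exactly $k\cdot k=k^2$ pairs. If $A,B$ are non-orthogonal, then ${\rm Im}(A)\not\subset X_0^B$, so some basis vector $e_i$ lies in a non-zero block of $A$ and in a non-zero block of $B$; pairing it with any of the infinitely many $e_j$ in (say) the infinite set $X_0^A\cap X_0^B$ (which is infinite, being the orthogonal complement of a finite-dimensional subspace in an infinite-dimensional space) gives infinitely many pairs separated by both, so $n_{\mathcal A}(A,B)$ is infinite.

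For part (ii), with $\dim H=n<\infty$, the count is a lower bound because in the finite-dimensional case the two ``zero blocks'' can interact, and I will only estimate conservatively. Let $m=\dim({\rm Im}(A)\cap{\rm Im}(B))$. Choose a basis-index set $M\subset I$ with $|M|=m$ spanning a subspace inside ${\rm Im}(A)\cap{\rm Im}(B)$ --- or more carefully, argue via compatibility that ${\rm Im}(A)\cap{\rm Im}(B)$ and the blocks are all simultaneously diagonalized by $B$, so there are $m$ basis vectors lying in non-zero blocks of both $A$ and $B$. The two disjoint index contributions I will exhibit are: (a) pairs $\{i,j\}$ with $e_i$ in a non-zero block of $A$ that is \emph{not} a non-zero block of $B$, and $e_j$ similarly for $B$; there are $\ge(k-m)$ choices on each side and they are disjoint, giving $\ge(k-m)^2$; and (b) pairs $\{i,j\}$ with $e_i$ among the $m$ ``common'' basis vectors and $e_j$ among the $n-2k+m$ basis vectors lying in ${\rm Im}(A)^\perp\cap{\rm Im}(B)^\perp$ (this intersection has dimension $\ge n-2k+m$ by inclusion--exclusion), giving $\ge m(n-2k+m)$; one checks each such pair is separated by both operators and that the two families are disjoint. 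Summing yields the stated bound. \textbf{The main obstacle} I anticipate is the verification in case (b)-type and the ``separated'' bookkeeping generally: I must be careful that when $e_i$ lies in a common non-zero block, the partner $e_j$ really is in a \emph{different} block for \emph{both} operators and not accidentally in the same block of one of them, and that the enumerated families of pairs are genuinely disjoint so the counts add; this requires tracking the four-fold classification of a basis vector (non-zero block of $A$ yes/no) $\times$ (non-zero block of $B$ yes/no) and confirming that a pair contributes to $n_{\mathcal A}(A,B)$ iff it avoids all the ``bad'' configurations for each operator separately. Getting the precise constant $(k-m)^2+m(n-2k+m)$ rather than something weaker is exactly the place where this careful disjointness argument has to be run, and it is where I expect to spend most of the effort.
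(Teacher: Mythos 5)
Your plan is correct and follows essentially the same route as the paper: you translate $A\in{\mathcal C}_{ij}$ into the condition that $e_i,e_j$ lie in different blocks of the eigenspace/kernel decomposition of $A$, and then count exactly the same families of pairs --- one vector in ${\rm Im}(A)$ and the other in ${\rm Im}(B)$ for the orthogonal case ($k^2$), one vector in ${\rm Im}(A)\cap{\rm Im}(B)$ and the other orthogonal to ${\rm Im}(A)+{\rm Im}(B)$ (infinitely many, resp. $m(n-2k+m)$), and one vector in ${\rm Im}(A)\setminus{\rm Im}(B)$ with the other in ${\rm Im}(B)\setminus{\rm Im}(A)$ ($(k-m)^2$). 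The verifications you defer (separation by both operators, disjointness of the two families) are routine and are exactly what the paper's proof does implicitly.
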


\begin{proof}
Let $X$ and $Y$ be the images of $A$ and $B$, respectively.
In the case when $X$ and $Y$ are orthogonal,
${\mathcal C}_{ij}$ contains both $A,B$ if and only if one of $e_{i},e_{j}$ belongs to $X$ and the other to $Y$.
There are precisely $k^2$ such orthocomplementary subsets.

Suppose that $X\cap Y$ is $m$-dimensional and $m>0$.
If $$e_{i}\in X\cap Y\;\mbox{ and }\;e_{j}\not\in X+Y,$$ 
then ${\mathcal C}_{ij}$ contains both $A,B$. 
In the case when $H$ is infinite-dimensional, there are infinitely many such ${\mathcal C}_{ij}$.
If $\dim H=n$ is finite, then 
$$\dim (X+Y)=2k-m$$
and there are precisely $m(n-2k+m)$ such orthocomplementary subsets.

If one of $e_{i},e_{j}$ belongs to $X\setminus Y$ and  the other to $Y\setminus X$, 
then ${\mathcal C}_{ij}$ contains both $A,B$.
The number of such ${\mathcal C}_{ij}$ is $(k-m)^2$ .
\end{proof}

\begin{rem}\label{rem2}{\rm
Suppose that there are $e_{i}\in X\cap Y$ and $e_{j}\in X\setminus Y$ which are not contained in the same eigenspace of $A$.
Then ${\mathcal C}_{ij}$ contains both $A,B$ and we have
$$n_{\mathcal A}(A,B)> (k-m)^{2}+m(n-2k+m).$$
}\end{rem}

\begin{lemma}\label{lemma-4}
In the case when $\dim H\ge 4k$,
operators $A,B\in {\mathcal A}$ are orthogonal if and only if $n_{\mathcal A}(A,B)=k^2$.
\end{lemma}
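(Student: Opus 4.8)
The plan is to combine the two parts of Lemma \ref{lemma-3} with Remark \ref{rem2} to pin down orthogonality exactly by the count $n_{\mathcal A}(A,B) = k^2$. One direction is already in hand: if $A, B$ are orthogonal, then Lemma \ref{lemma-3}(i) (for infinite-dimensional $H$) or the first paragraph of the proof of Lemma \ref{lemma-3} (for finite-dimensional $H$, taking $m = 0$ in part (ii), noting that the bound is then an equality since $X + Y = H$ need not hold but the count of ${\mathcal C}_{ij}$ with one basis vector in $X$ and the other in $Y$ is exactly $(k-m)^2 = k^2$ and no other ${\mathcal C}_{ij}$ contains both) gives $n_{\mathcal A}(A,B) = k^2$. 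So the work is the converse: assuming $A, B$ are \emph{not} orthogonal, I must show $n_{\mathcal A}(A,B) \ne k^2$, in fact $n_{\mathcal A}(A,B) > k^2$.

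First I would handle the infinite-dimensional case, which is immediate: by Lemma \ref{lemma-3}(i), if $A, B$ are non-orthogonal then $n_{\mathcal A}(A,B)$ is infinite, hence certainly not equal to $k^2$. So assume $\dim H = n$ is finite with $n \ge 4k$, and let $m = \dim(X \cap Y) \ge 1$ where $X = {\rm Im}(A)$, $Y = {\rm Im}(B)$ (non-orthogonality of the operators is equivalent to $XY \ne 0$, and since both are $k$-dimensional images of self-adjoint operators this forces $X \cap Y \ne 0$, so $m \ge 1$). By Lemma \ref{lemma-3}(ii),
\[
n_{\mathcal A}(A,B) \ge (k-m)^2 + m(n - 2k + m).
\]
The goal is to show the right-hand side exceeds $k^2$. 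Expanding, $(k-m)^2 + m(n-2k+m) - k^2 = -2km + m^2 + mn - 2km + m^2 - k^2 + k^2 = m^2 + m^2 - 4km + mn = m(2m - 4k + n) = m(n - 4k + 2m)$. Wait — let me recompute carefully in the write-up; the point is that the difference equals $m(n - 4k + 2m)$, and since $n \ge 4k$ and $m \ge 1$ we get $n - 4k + 2m \ge 2m \ge 2 > 0$, so $n_{\mathcal A}(A,B) \ge k^2 + m(n-4k+2m) > k^2$.

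The arithmetic of that expansion is the only place an error could creep in, so that is the step to check most carefully; but there is a genuine subtlety lurking underneath it that I should address. Lemma \ref{lemma-3}(ii) only gives a lower bound, and its proof \emph{counts certain ${\mathcal C}_{ij}$ containing $A, B$ without verifying these are all distinct from one another or exhaustive}; for the converse direction of Lemma \ref{lemma-4} a lower bound is exactly what is needed (I only need $n_{\mathcal A}(A,B) > k^2$, not equality), so this is fine. Conversely, for the \emph{first} (orthogonal $\Rightarrow$) direction in the finite case I need the count to be \emph{exactly} $k^2$, which is the content of the first paragraph of the proof of Lemma \ref{lemma-3}: when $X \perp Y$, the subsets ${\mathcal C}_{ij}$ containing both $A$ and $B$ are \emph{precisely} those with one of $e_i, e_j$ in $X$ and the other in $Y$, giving exactly $k \cdot k = k^2$ (here I use that $e_i \in X$ and $e_j \in X$ cannot both hold for such a ${\mathcal C}_{ij}$ since then neither $A$ nor $B$ — recall $A \in {\mathcal C}_{ij} = {\mathcal A}(+i,-j) \cup {\mathcal A}(+j,-i)$ forces exactly one of $e_i, e_j$ in the relevant eigenspace — hmm, I should double-check that membership of $A$ in ${\mathcal C}_{ij}$ when $X \perp Y$ really does force the stated configuration, using that all eigenspaces of $A$ lie in $X$ and $e_i, e_j$ must then be split between $X$ and $X^\perp \supseteq Y$). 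Assembling these: $n_{\mathcal A}(A,B) = k^2$ iff $A, B$ orthogonal, which is the lemma.
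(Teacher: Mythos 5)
Your proposal follows essentially the same route as the paper: the forward direction comes from the exact count of the orthocomplementary subsets ${\mathcal C}_{ij}$ containing an orthogonal pair (the first paragraph of the proof of Lemma \ref{lemma-3}), and the converse from the lower bound of Lemma \ref{lemma-3}(ii); your direct expansion $c(m)-c(0)=m(n-4k+2m)>0$ for $m\ge 1$, $n\ge 4k$ is exactly the paper's observation that the quadratic $c(x)=2x^{2}-(4k-n)x+k^{2}$ attains its minimum at $x=(4k-n)/4\le 0$, so $c(m)>c(0)=k^{2}$. The arithmetic is correct.

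One justification in your argument is wrong as stated, though the conclusion you need is true: you claim that non-orthogonality of $A$ and $B$ forces ${\rm Im}(A)\cap{\rm Im}(B)\ne 0$ ``since both are $k$-dimensional images of self-adjoint operators.'' For arbitrary self-adjoint operators this fails (two non-orthogonal subspaces can intersect trivially). What saves you is that $A,B\in{\mathcal A}$ lie in a common orthogonal apartment, so ${\rm Im}(A)$ and ${\rm Im}(B)$ are spanned by subsets of the same orthonormal basis $\{e_i\}_{i\in I}$; for such coordinate subspaces non-orthogonality does force a common basis vector, hence $m\ge 1$, which is also the (implicit) dichotomy behind the paper's case split in Lemma \ref{lemma-3}. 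Replace your parenthetical reason by this one-line observation and the proof is complete.
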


\begin{proof}
If $H$ is infinite-dimensional, then the statement follows immediately from the first part of Lemma \ref{lemma-3}.
Suppose that $\dim H=n$ is finite and consider the quadratic function 
$$c(x)=(k-x)^{2}+x(n-2k+x)=2x^{2}-(4k-n)x+k^2.$$
It takes the minimal value on $x=(4k-n)/4$.
Since $n\ge 4k$, we have $(4k-n)/4\le 0$.
Then for every natural $m>0$
$$k^2=c(0)<c(m)=(k-m)^{2}+m(n-2k+m)$$
and Lemma \ref{lemma-3} gives the claim.
\end{proof}

\begin{rem}{\rm
Suppose that $\dim H=n$ is even and $2k <n<4k$. 
Them $m=(4k-n)/2$ is a natural number and $0<m<k$.
A direct verification shows that $$c(0)=c(m)$$
and we cannot state that $n_{\mathcal A}(A,B)\ne k^2$ 
if the intersection of the images of $A$ and $B$ is $m$-dimensional.
There is another one problem. 
In the case when $\dim H<4k$, we have 
$$c(0)>c(m)$$ for some $m\in \{1,\dots,k-1\}$.
By Remark \ref{rem2}, there are $A,B\in {\mathcal A}$ 
(the intersection of the images is assumed to be $m$-dimensional)
such that $$n_{\mathcal A}(A,B)>c(m).$$
As above, we cannot state that $n_{\mathcal A}(A,B)\ne k^2$.
}\end{rem}

Let $f$ be a bijective transformation of ${\mathcal C}$ preserving the commutativity in both directions.
Then $f$ and $f^{-1}$ send orthogonal apartments to orthogonal apartments.
For any orthogonal operators $A,B\in {\mathcal C}$ there is an orthogonal apartment ${\mathcal A}\subset {\mathcal C}$
containing them.
The transformation $f$ sends orthogonally inexact subsets of ${\mathcal A}$ 
to orthogonally inexact subsets of $f({\mathcal A})$
and $f^{-1}$ maps orthogonally inexact subsets of $f({\mathcal A})$ to orthogonally inexact subsets of ${\mathcal A}$.
This means that ${\mathcal X}$ is a maximal orthogonally inexact subset of ${\mathcal A}$
if and only if $f({\mathcal X})$ is a maximal orthogonally inexact subset of $f({\mathcal A})$.
Therefore, a subset ${\mathcal C}\subset {\mathcal A}$ is orthocomplementary if and only if 
$f({\mathcal C})$ is an orthocomplementary subset of $f({\mathcal A})$.
Since $A$ and $B$ are orthogonal, we have
$$n_{f({\mathcal A})}(f(A),f(B))=n_{\mathcal A}(A,B)=k^2.$$
In the case when $\dim H\ge 4k$, the operators $f(A)$ and $f(B)$ are orthogonal by Lemma \ref{lemma-4}.
Similarly, we establish that $f^{-1}$ is orthogonality preserving.

\end{document}